\newtheorem{thm}{Theorem}[section]
\newtheorem*{thm*}{Theorem}
\newtheorem{lem}[thm]{Lemma}
\theoremstyle{definition}
\numberwithin{equation}{section}
\newcommand{\mbr}{\mathbb{R}}
\newcommand{\mme}{\mathrm{e}}
\newcommand{\ol}{\overline}
\newcommand{\M}{{\mathcal M}}
\newcommand{\re}{\textup{Re}}
\newcommand{\newabstract}[1]{%
	\par\bigskip
	\csname otherlanguage*\endcsname{#1}%
	\csname captions#1\endcsname
	\item[\hskip\labelsep\scshape\abstractname.]
}
\begin{document}

	\baselineskip=17pt

	\title[Large character sums with multiplicative coefficients]{Large character sums with multiplicative coefficients}

	\author{Zikang Dong\textsuperscript{1}}

	\author{Yutong Song\textsuperscript{2}}
        \author{Weijia Wang\textsuperscript{3}}
            \author{Hao Zhang\textsuperscript{4}}
    \author{Shengbo Zhao\textsuperscript{2}}
	\address{1.School of Mathematical Sciences, Soochow University, Suzhou 215006, P. R. China}
	\address{2.School of Mathematical Sciences, Key Laboratory of Intelligent Computing and Applications(Ministry of Education), Tongji University, Shanghai 200092, P. R. China}
    \address{3.Morningside Center of Mathematics, Academy of Mathematics and Systems Science, Chinese Academy of
		Sciences, Beijing 100190, P. R. China}
	\address{4.School of Mathematics, Hunan University, Changsha 410082, P. R. China}
	\email{zikangdong@gmail.com}
    \email{99yutongsong@gmail.com}
	\email{weijiawang@amss.ac.cn}
	\email{zhanghaomath@hnu.edu.cn}

	\email{shengbozhao@hotmail.com}
	\date{\today}
	
	\begin{abstract} 
		In this paper, we investigate  large values of  Dirichlet character sums with multiplicative coefficients $\sum_{n\le N}f(n)\chi(n)$. We prove a new Omega result in the region $\exp((\log q)^{\frac12+\delta})\le N\le\sqrt q$, where $q$ is the prime modulus.
	\end{abstract}

	\subjclass[2020]{Primary 11L40, 11M06.}
	
	\maketitle
	
	\section{Introduction}
Character sums play an important role and appear in many central problems in analytic number theory. A classical well-known result of Pólya--Vinogradov shows that for any non-principal $\chi$ modulo large prime $q$,
\[
\sum_{n\le N}\chi(n)\ll \sqrt q\log q.
\]
For shorter sums, the celebrated method of Burgess gives the upper bound
\[
\sum_{n\le N}\chi(n)\ \ll N^{1-\frac1r} q^{\frac{r+1}{4r^{2}}}\log q
\]
for any integer $r\geq 1$, which provides nontrivial stronger estimate as soon as $N \gg q^{1/4+o(1)}$.

It is also natural to see how lower bounds can be achieved for both large or short character sums. Paley showed that for infinitely many $q$ and quadratic characters $\chi$ modulo $q$, 
\[
\max\limits_{N \leq q} \Big| \sum_{n \leq N} \chi(n) \Big| \gg \sqrt{q} \log \log q
\]
Bateman and Chowla strengthened this by proving for an infinite sequence of moduli $q$ and primitive quadratic characters $\chi$ modulo $q$, it holds that 
\[
 \max\limits_{N \leq q} \Big| \sum_{n \leq N} \chi(n) \Big| \geq \Big( \frac{\mathrm{e}^\gamma}{\pi} + o(1) \Big) \sqrt{q} \log \log q,
\]
where $\gamma$ is the Euler--Mascheroni constant.

If we consider the maximum size
$$\max_{\chi\neq\chi_0({\rm mod}\;q)}\Big|\sum_{n\le N}\chi(n)\Big|,$$
Granville and Soundararajan's celebrated work \cite{GS01} shows several lower bounds as $N$ varies in different ranges according to $q$, which provide much evidence for their conjecture: $\max_{\chi\neq\chi_0({\rm mod}\;q)}|\sum_{n\le N}\chi(n)|$ increases on $N\le q$.

The aim of this paper is study the large values of the mixed sums $\sum_{n\le N}f(n)\chi(n)$ which are ubiquitous in the regime $\exp((\log q)^{1/2})\leq N\leq q^{1/2}$. We do so by adapting the resonance method to incorporate the twist $f$ directly into the resonator. This leads to two Omega results of the Dirichlet character sums. The first guarantees large values when $\log N$ is taken in the size of $(\log q\log_2 q)^{\frac12}(\log_2q)^{O(1)}$. 
\begin{thm}\label{thm1.1}
    Let $\log N=(\log q\log_2q)^{\frac12}\tau$ with $\tau=(\log_2q)^{O(1)}$. Let $f$  be any completely multiplicative function such that $|f(n)|=1$ for any integer $n$. Then we have
    $$\max_{\chi({\rm mod}\;q)\atop \chi\neq\chi_0}\Big|\sum_{n\le N}f(n)\chi(n)\Big|\ge \sqrt{N}\exp\bigg((1+o(1))A(\tau+\tau'){\sqrt{\frac{\log q}{\log_2 q}}}\bigg),$$
      where $A,\tau'\in\mbr$ such that 
   \[\tau=\int_A^\infty \frac{\mme^{-u}}{u}d u,\qquad \tau'=\int_A^\infty\frac{\mme^{-u}}{u^2}d u.\]
\end{thm}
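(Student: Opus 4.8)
The plan is to run a resonance (amplification) argument in which the twist $f$ is absorbed into the resonator, in the spirit of Soundararajan's resonance method but adapted to the short sum $\sum_{n\le N}f(n)\chi(n)$ and to the orthogonality of Dirichlet characters modulo $q$. First I would fix a window of primes of size about $\log q\log_2q$ and an integer $k\asymp\log q/\log_2q$, let $\mathcal A$ be the set of squarefree products of at most $k$ of these primes (all of which are then $\le M$ for a suitable $M$ with $MN<q$), and choose a nonnegative multiplicative coefficient $r$ with a profile $r(p)$ depending on $\log p$ and on the parameter $A$; the resonator is $R(\chi)=\sum_{m\in\mathcal A}r(m)f(m)\chi(m)$. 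Using $\sum_{\chi\bmod q}\chi(u)\overline{\chi(v)}=\phi(q)\mathbf 1_{u\equiv v\,(q),\,(uv,q)=1}$ together with $M<q$ gives $\sum_{\chi\bmod q}|R(\chi)|^2=\phi(q)\sum_{m\in\mathcal A,(m,q)=1}r(m)^2$, while the amplified second moment $\sum_{\chi\bmod q}|R(\chi)|^2\big|\sum_{n\le N}f(n)\chi(n)\big|^2$ collapses, once $MN<q$ forces the congruence $m_1n_1\equiv m_2n_2\,(q)$ to become the equation $m_1n_1=m_2n_2$, to $\phi(q)\sum_{m_1n_1=m_2n_2,\,m_i\in\mathcal A,\,n_i\le N}r(m_1)r(m_2)$ plus lower-order coprimality corrections. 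The key structural point is that on the locus $m_1n_1=m_2n_2$ one has $f(m_1)\overline{f(m_2)}f(n_1)\overline{f(n_2)}=f(m_1n_1)\overline{f(m_2n_2)}=1$ since $f$ is completely multiplicative of modulus $1$; hence the amplified moment is a nonnegative sum that does not see $f$ at all.

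Next I would isolate the principal character: $|R(\chi_0)|^2\le\big(\sum_m r(m)\big)^2$ and $\big|\sum_{n\le N,(n,q)=1}f(n)\big|\le N$, and these are negligible compared with the main terms (using $N\le\sqrt q$ and the size of the resonator), so by positivity
\[
\max_{\chi\neq\chi_0}\Big|\sum_{n\le N}f(n)\chi(n)\Big|^2\ \ge\ (1-o(1))\,\frac{\sum_{m_1n_1=m_2n_2}r(m_1)r(m_2)}{\sum_{m\in\mathcal A}r(m)^2}\,.
\]
Then I would reduce the right-hand side to a GCD-type sum: writing $d=(m_1,m_2)$, $m_1=da$, $m_2=db$ with $(a,b)=1$, the solutions of $m_1n_1=m_2n_2$ are $n_1=bt$, $n_2=at$ with $1\le t\le N/\max(a,b)$; summing over $d$ (whose truncations $da,db\le M$ are harmless because the resonator is short, $\sum_p\log p\ll\log q$) converts the quotient into
\[
\gg\ N\sum_{\substack{a,b\ \mathrm{coprime},\,\le N\\ a,b\in\mathcal A}}\frac{g(a)g(b)}{\max(a,b)},\qquad g(p):=\frac{r(p)}{1+r(p)^2}\in(0,\tfrac12],
\]
which is a ``$\max$''/GCD sum of the kind that controls extreme values. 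The $a=b=1$ term already gives $\gg N$, i.e.\ the trivial-looking $\sqrt N$; the task is to show the whole sum is $\exp\big((2+o(1))A(\tau+\tau')\sqrt{\log q/\log_2q}\big)$.

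The heart of the argument, and the step I expect to be the main obstacle, is the optimization of the resonator profile. By Rankin's trick one shows that the GCD sum is governed by $\prod_{p\in\mathcal P}\big(1+g(p)p^{-1/2}\big)^2$, provided the associated ``mass point'' $\log t^\ast=\sum_p g(p)p^{-1/2}\log p/(1+g(p)p^{-1/2})$ stays $\le\log N$; one then maximizes $\sum_p\log\big(1+g(p)p^{-1/2}\big)$ over admissible profiles subject to that constraint and to the resonator-length constraint $\sum_{p\in\mathcal P}\log p\le(1-\varepsilon)\log q$. Solving the resulting Euler--Lagrange/saddle-point problem, the optimal profile is supported on primes up to roughly $\log q\log_2q$ and, after rescaling $\log p$ to a variable $u$, has coefficients of essentially the shape $e^{-u}$ cut off at $u=A$; evaluating $\sum_p\log(1+g(p)p^{-1/2})$ and the mass-point constraint against the prime density $\sim e^{\ell}/\ell$ then produces exactly the exponential integrals $\int_A^\infty e^{-u}u^{-1}\,du$ and $\int_A^\infty e^{-u}u^{-2}\,du$: the first pins down $A$ in terms of $\tau=\log N/(\log q\log_2q)^{1/2}$, the second supplies the $\tau'$ term, and the exponent emerges as $2A(\tau+\tau')\sqrt{\log q/\log_2q}$. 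Taking square roots gives the theorem.

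The delicate points are all in this last step: extracting the correct constant rather than merely $\exp\big(\Theta(\sqrt{\log q/\log_2q})\big)$, checking that the mass-point constraint and the resonator-length constraint are simultaneously compatible with the $e^{-u}$-profile, and making every error term uniform throughout the range $\tau=(\log_2q)^{O(1)}$ (where $A$ may tend to $0$ or to $\infty$). Secondary technical care is needed in the reduction to the GCD sum (the divisor truncation $da,db\le M$) and in the removal of $\chi_0$, which must be genuinely lower order for every completely multiplicative unimodular $f$, including $f\equiv1$, where the inner sum can be as large as $N$.
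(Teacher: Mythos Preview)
Your approach is sound in outline but differs from the paper's in one essential choice: you amplify the \emph{square} $|S_\chi(N)|^2$, whereas the paper amplifies the \emph{first power} $S_\chi(N)$ itself. Concretely, the paper sets $Y=q/N$, takes Soundararajan's profile $r(p)=\lambda/(\sqrt p\log p)$ on $\lambda\le p\le\exp((\log\lambda)^2)$ with $\lambda=\sqrt{\log Y\log_2 Y}$, defines $R_\chi=\sum_{m\le Y}r(m)f(m)\chi(m)$, and compares
\[
M_1=\sum_{\chi\neq\chi_0}|R_\chi|^2,\qquad M_2=\sum_{\chi\neq\chi_0}S_\chi(N)\,|R_\chi|^2.
\]
Orthogonality then forces $mk=n$ (not a four-term relation $m_1n_1=m_2n_2$), and since $f(mk)\overline{f(n)}=1$ on that locus one gets directly
\[
\frac{M_2}{M_1}\ \ge\ (1+o(1))\sum_{k\le N}r(k)+O(1),
\]
after which the evaluation of $\sum_{k\le N}r(k)$ with the fixed profile above is literally Hough's computation (pp.\ 105--107 of \cite{Hou}), producing the integrals $\tau=\int_A^\infty e^{-u}u^{-1}\,du$ and $\tau'=\int_A^\infty e^{-u}u^{-2}\,du$ with no further optimization needed.

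Your squared-moment route is legitimate and the cancellation $f(m_1n_1)\overline{f(m_2n_2)}=1$ on $m_1n_1=m_2n_2$ is correct, but it trades a one-line reduction for a GCD/$\max$ sum whose sharp evaluation you then have to carry out from scratch; the step you flag as ``the main obstacle'' is exactly the step the paper avoids. Getting precisely $A(\tau+\tau')$ out of $\prod_p(1+g(p)p^{-1/2})^2$ under both the mass-point and length constraints, uniformly in $\tau=(\log_2 q)^{O(1)}$, is not the standard GCD-sum bound (which yields constants of the $2\sqrt2$ type as in Theorem~\ref{thm1.2}) and would require reproving the Soundararajan--Hough saddle-point analysis in a slightly different guise. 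The first-power method is both shorter and already matched to the existing literature; your variant would work, but with more effort and with the sharp-constant step left genuinely open in your sketch.
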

   This generalize Theorem 3.2 of Hough \cite{Hou}. When $\log N$ is larger than $(\log q)^{\frac12+\delta}$ for some $\delta>0$, we have the second theorem.

\begin{thm}\label{thm1.2}
 Let $\delta \in(0,\frac1{100})$ be any fixed small number. Let $q$ be a sufficiently large prime, and $N$ satisfy $\exp((\log q)^{\frac12+\delta})\le N\le q^{\frac12}$. Let $f$  be any completely multiplicative function such that $|f(n)|=1$ for any integer $n$. If we additionally assume $\re f(m)\overline f(n)\ge c$  holds for any integers $m,n$ and some absolute constant $c>0$, then we have
    $$ \max_{\chi({\rm mod}\;q)\atop \chi\neq\chi_0}\Big|\sum_{n\le N}f(n)\chi(n)\Big|\ge \sqrt{N}\exp\bigg((\sqrt2+o(1)){\sqrt{\frac{\log (q/N)\log_3(q/N)}{\log_2 (q/N)}}}\bigg).$$ 
\end{thm}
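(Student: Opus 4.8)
The plan is to run a second‑moment resonance argument in which the twist $\overline f$ is built into the resonator, to use the positivity hypothesis on $f$ to peel off a positive main term, and to reduce the theorem to a Gál/GCD‑type optimisation of the type settled by Bondarenko--Seip. Write $X=q/N$, $M_0=X/(\log q)^{10}$ (so that $NM_0<q$; this is exactly where $N\le q^{1/2}$ is used), and $M(\chi)=\sum_{n\le N}f(n)\chi(n)$. For a non‑negative multiplicative $r$, supported on squarefree integers in $[1,M_0]$ with prime factors in a set $\mathcal P$ to be chosen and normalised by $r(1)=1$, I would set $R(\chi)=\sum_{m\le M_0}r(m)\overline{f(m)}\chi(m)$ and start from
\[
\max_{\chi\ne\chi_0}|M(\chi)|^2\ \ge\ \frac{\sum_\chi|M(\chi)|^2|R(\chi)|^2\ -\ |M(\chi_0)|^2|R(\chi_0)|^2}{\sum_\chi|R(\chi)|^2}.
\]
Here $\sum_\chi|R(\chi)|^2=\varphi(q)\sum_m r(m)^2$ (since $M_0<q$ and $|f|=1$), and since $|M(\chi_0)|\le N$, $|R(\chi_0)|\le\sum_m r(m)$, Cauchy--Schwarz gives $|M(\chi_0)|^2|R(\chi_0)|^2\le N^2M_0\sum_m r(m)^2=o\big(\varphi(q)N\sum_m r(m)^2\big)$, so the principal character is harmless whatever the resonator does.

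The next step is to diagonalise. By orthogonality mod $q$ and $NM_0<q$, the congruence $n_1m_1\equiv n_2m_2\ (\mathrm{mod}\ q)$ with $n_i\le N$, $m_i\le M_0$ forces $n_1m_1=n_2m_2$, so $\sum_\chi|M|^2|R|^2=\varphi(q)\sum_{n_1m_1=n_2m_2}f(n_1)\overline{f(n_2)}\,r(m_1)\overline{f(m_1)}\,r(m_2)f(m_2)$. Parametrising by $e=\gcd(m_1,m_2)$, $m_1=eb_1$, $m_2=eb_2$ with $(b_1,b_2)=1$, and $n_1=\ell b_2$, $n_2=\ell b_1$ (a bijection), complete multiplicativity and $|f|=1$ collapse the $f$‑factor to $f(b_2^2)\overline{f(b_1^2)}$. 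Since the left side is a non‑negative real one may take real parts term by term, and the hypothesis $\re\big(f(m)\overline{f(n)}\big)\ge c$ (with $m=b_2^2$, $n=b_1^2$) together with $r\ge 0$ yields
\[
\sum_\chi|M(\chi)|^2|R(\chi)|^2\ \ge\ \varphi(q)N\sum_m r(m)^2\ +\ c\,\varphi(q)N\Big(\sum_m r(m)^2\Big)\mathcal G,
\]
the first term being the pure diagonal $b_1=b_2=1$ and, with $\lfloor N/\max(b_1,b_2)\rfloor$ coming from the free variable $\ell$,
\[
\mathcal G=\sum_{\substack{b_1\ne b_2,\ (b_1,b_2)=1\\ b_1,b_2\le N}}\frac{r(b_1)r(b_2)}{\max(b_1,b_2)}\cdot\frac{\sum_{e\le M_0/\max(b_1,b_2),\ (e,b_1b_2)=1}r(e)^2}{\sum_m r(m)^2}.
\]
Combining this with the previous display gives $\max_{\chi\ne\chi_0}|M(\chi)|^2\gg N(1+\mathcal G)$, so it remains to produce $\mathcal P$ and $r$ with $\mathcal G\ge\exp\big((2\sqrt2+o(1))\sqrt{\log X\log_3X/\log_2X}\big)$; taking square roots then yields the theorem. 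Note the hypothesis on $f$ is used only here, and is essential: without it the $b_1\ne b_2$ terms carry arbitrary phases and could destroy the main term.

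It remains to maximise $\mathcal G$. The inner sum over $e$ runs over an initial segment of length $M_0/\max(b_1,b_2)$, which is of order $X^{1-o(1)}$ as long as $\max(b_1,b_2)=X^{o(1)}$, so for a resonator whose $L^2$‑mass sits on integers well below $M_0$ the $e$‑factor is $1+o(1)$, the normalisation $\sum_m r(m)^2$ is swallowed, and $\mathcal G$ becomes essentially the GCD sum $\sum_{b_1\ne b_2,(b_1,b_2)=1}\widetilde r(b_1)\widetilde r(b_2)/\max(b_1,b_2)$ with $\widetilde r(b)=r(b)\prod_{p\mid b}(1+r(p)^2)^{-1/2}\le 1$. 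This is exactly the extremal problem for GCD sums solved by Bondarenko--Seip: choosing $\mathcal P$ to be the primes in a window near $\log X\cdot\log_2X$, with $\asymp(\log X\log_3X/\log_2X)^{1/2}$ prime factors and weights of matching size, produces $\mathcal G\ge\exp\big((2\sqrt2+o(1))\sqrt{\log X\log_3X/\log_2X}\big)$. The length scale of the optimisation is $M_0=X^{1+o(1)}$ (so that $q/N$, not $q$, governs the bound), while the cap $b_1,b_2\le N$ is harmless precisely because the lower bound $N\ge\exp((\log q)^{1/2+\delta})$ forces the near‑extremal products to have size $\exp\big(O(\sqrt{\log X\log_2X\log_3X})\big)=N^{o(1)}$.

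The heart of the proof — and the place I expect to spend the most effort — is this last optimisation: carrying out the Bondarenko--Seip entropy argument so as to obtain the \emph{sharp} constant $2\sqrt2$ in the exponent of $\mathcal G$ (hence $\sqrt2$ in the theorem) while simultaneously respecting the support constraint $\le M_0\approx q/N$ and the cap $b_1,b_2\le N$, and verifying that truncating the $e$‑sum and discarding the coprimality conditions each cost only $\exp\big(o(\sqrt{\log X\log_3X/\log_2X})\big)$. Checking that the pure‑diagonal term $N$ is of lower order than $N\mathcal G$, and the bound on the principal‑character term above, are by comparison routine.
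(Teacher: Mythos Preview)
Your plan is sound and lands on the same reduction as the paper: a second–moment resonance inequality, orthogonality to pass to $n_1m_1\equiv n_2m_2\pmod q$, the hypothesis $\re f(m)\overline{f(n)}\ge c$ to make the off–diagonal non-negative, and finally a GCD–sum lower bound of size $\exp\big((2\sqrt2+o(1))\sqrt{\log(q/N)\log_3(q/N)/\log_2(q/N)}\big)$.

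The execution, however, differs from the paper in two linked ways. First, the paper does \emph{not} take a multiplicative resonator supported on $[1,M_0]$; it takes the extremal set $\mathcal M$ of cardinality $\lfloor q/N\rfloor$ from de la Bret\`eche--Tenenbaum (whose elements can be large), collapses residue classes mod $q$ by replacing $\mathcal M$ with a set $\mathcal M'$ of representatives weighted by $r(m')=\sqrt{|\mathcal M_j|}$, and then uses the elementary inequality $r(m')r(n')\ge\min\{r(m')^2,r(n')^2\}$ to pass back from $\mathcal M'$ to $\mathcal M$. This bypasses your parametrisation in $e,b_1,b_2$ entirely and never needs $r$ to be multiplicative. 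Second, and consequently, the paper invokes the sharp GCD–sum bound of de la Bret\`eche--Tenenbaum as a black box (their Theorem~1.1), so the optimisation you flag as ``the heart of the proof'' is a one–line citation rather than a re-run of an entropy argument. The only remaining work is to show that the tail $[m,n]/(m,n)>N^2/2$ contributes negligibly, which is where the lower bound $N\ge\exp((\log q)^{1/2+\delta})$ and the smoothness $P(m)\le(\log(q/N))^{1+o(1)}$ of the extremal set are used.

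One correction: the constant $2\sqrt2$ is \emph{not} the Bondarenko--Seip constant; it is the improvement of de la Bret\`eche--Tenenbaum. If you insist on a multiplicative $r$ and the Bondarenko--Seip construction you describe, you will recover their weaker constant and fall short of the theorem as stated. So either (i) drop multiplicativity and follow the paper's $\mathcal M'$ route, quoting de la Bret\`eche--Tenenbaum directly, or (ii) keep your framework but import the de la Bret\`eche--Tenenbaum weights into your $\mathcal G$; option (i) is shorter.
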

This improves previous result of \cite{DLSZ}, at a cost of assuming an additional condition for $f(\cdot)$.

Recently, Harper \cite{Harper23} showed some moments results for the mixed character sums. He conjectured that, when $N$ goes beyond the size of $\exp(\sqrt {\log q})$, the influence of $f(\cdot)$ will surpass $\chi(\cdot)$, which can lead to a new upper bound for low moments of $\sum_{n\le N}f(n)\chi(n)$. See \cite{Harper23} for more details.
\section{Proof of Theorem \ref{thm1.1}}
The Proof of Theorem \ref{thm1.1} relies on Hough's work \cite{Hou}, which is essentially the resonance method of Soundararajan \cite{Sound}.
Let $Y=q/N$ be large and $\lambda=\sqrt{\log Y\log_2 Y}$. Define the multiplicative function $r$  for any prime $p$:
$$r(p)=\begin{cases}
   \frac{\lambda}{\sqrt p \log p}, &  \lambda\le p\le \exp((\log\lambda)^2),\\
   0, & {\rm otherwise.}
   \end{cases}$$
   Denote
   $$S_\chi(N):=\sum_{n\le N}f(n)\chi(n).$$
    We define the resonator $$R_\chi=\sum_{n\leq Y}r(n)f(n)\chi(n),$$
    and 
   $$M_1(R,q):=\sum_{\chi\neq\chi_0({\rm mod}\;q)}|R_\chi|^2,$$
$$M_2(R,q):=\sum_{\chi\neq\chi_0({\rm mod}\;q)}S_\chi(N)|R_\chi|^2.$$
    then we have 
\[\max_{\chi\neq\chi_0({\rm mod}\;q)}|S_\chi(N)|\geq {\frac{|M_2(R,q)|}{M_1(R,q)}}.\]
 By the orthogonal relation, we have that 
\[\frac{1}{\varphi(q)}\sum_{({\rm mod}\;q)}\chi(n)\ol{\chi}(m)=\begin{cases}
    1&\text{ if }\;q|n-m,\\
    0& \text{otherwise}.
\end{cases}\]
So for $M_1$ we have  
$$M_1(R,q)\le \sum_{\chi({\rm mod}\;q)}|R_\chi|^2\le\varphi(q)\sum_{m\le Y}r(m)^2.$$
For $M_2$ we firstly have
$$|S_{\chi_0}(N)||R_{\chi_0}|^2\le N\Big(\sum_{m\le Y}r(m)\Big)^2\le NY\sum_{m\le Y}r(m)^2=q\sum_{m\le Y}r(m)^2.$$
Thus we have
\begin{align*}
   & M_2(R,q)+O\Big(q\sum_{m\le Y}r(m)^2\Big)\\
   &\ge\sum_{\chi({\rm mod}\;q)}S_\chi(N)|R_\chi|^2\\
   &=\varphi(q)\sum_{m,n\le Y}r(m)r(n)\sum_{k\le N\atop mk=n}f(mk)\overline f(n)\\
   &=\varphi(q)\sum_{k\le N}r(k)\sum_{m\le Y/k}r(m)^2.
\end{align*}
By the proof of Page 105 in \cite{Hou}, we have 
$$\sum_{m\le Y/k}r(m)^2=(1+o(1))\sum_{m\ge1}r(m)^2.$$
Combined with $M_1$ we have
\begin{align*}
\frac{M_2(R,q)}{M_1(R,q)}
\ge(1+o(1))\sum_{k\le N}r(k)+O(1).
\end{align*}
Then we completes the proof by following Page 105--107 of \cite{Hou}.
\section{Proof of Theorem \ref{thm1.2}}
The proof of Theorem \ref{thm1.2} is highly based on the work of La Bret\`eche and Tenenbaum \cite{BT}, which combines sharp evaluation of GCD sums and resonance method. This kind of technique can date back to Aistleitner's work \cite{Ais}.
\begin{lem}\label{lem2.3}

    Let $\M$ be any set of positive  integers with $|\M|=K$. Then as $K\to\infty$, we have
    $$\max_{|\M|=K}\sum_{m,n\in\M}\sqrt{\frac{(m,n)}{[m,n]}}=K\exp\bigg((2\sqrt2+o(1))\sqrt{\frac{\log K\log_3K}{\log_2K}}\bigg).$$
\end{lem}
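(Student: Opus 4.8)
The plan is to recognise the sum as a Gál-type GCD sum and then to establish matching lower and upper bounds of the stated shape, following La Bret\`eche and Tenenbaum \cite{BT}. Since $(m,n)[m,n]=mn$, one has $\sqrt{(m,n)/[m,n]}=(m,n)/\sqrt{mn}$, so that
\[
\sum_{m,n\in\M}\sqrt{\frac{(m,n)}{[m,n]}}=\sum_{m,n\in\M}\frac{(m,n)}{\sqrt{mn}}=:\Gamma(\M),
\]
and the lemma is exactly the determination of $\sup_{|\M|=K}\Gamma(\M)$ with the optimal constant $2\sqrt2$; this is the main theorem of \cite{BT}, sharpening the order of magnitude first obtained by Bondarenko and Seip and the method initiated by Aistleitner \cite{Ais}. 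I would reproduce the two halves of that argument.

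For the lower bound I would construct a near-extremal $\M$. Note first that the naive choice --- the set of all divisors of $\prod_{p\le z}p$ with $z$ tuned so that $2^{\pi(z)}=K$ --- only yields $\Gamma(\M)=K\exp\{(2+o(1))\sqrt{\log K/\log_2K}\}$, because there $\Gamma(\M)/K$ factorises as the Euler product $\prod_{p\le z}(1+p^{-1/2})$; this misses a factor $\sqrt{2\log_3K}$ in the exponent, so a genuinely multi-scale construction is needed. Following \cite{BT}, I would instead fix a slowly growing $\ell\to\infty$ and a family of prime ranges at several scales, and take $\M$ to be the squarefree integers that are products of exactly $\ell$ primes drawn from this pool, the endpoints of the ranges being chosen so that $|\M|=K$. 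The diagonal contributes exactly $K$, and for squarefree $m\ne n$ one has $(m,n)/\sqrt{mn}=\prod_{p\mid mn/(m,n)^2}p^{-1/2}$, so the dominant off-diagonal mass comes from pairs whose symmetric difference is a short product of small primes from the pool; summing these contributions, optimising the scales against $\ell$, and re-expressing everything in terms of $\log K,\log_2K,\log_3K$ produces the lower bound $\Gamma(\M)\ge K\exp\{(2\sqrt2+o(1))\sqrt{\log K\log_3K/\log_2K}\}$.

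For the upper bound I would begin from $(m,n)=\sum_{d\mid(m,n)}\varphi(d)$, which gives
\[
\Gamma(\M)=\sum_{d\ge1}\varphi(d)\Big(\sum_{\substack{m\in\M\\ d\mid m}}\frac1{\sqrt m}\Big)^2,
\]
reducing matters to estimating, for each $d$, the inner sum over the multiples of $d$ lying in $\M$. A crude treatment (Cauchy--Schwarz together with $|\{m\in\M:d\mid m\}|\le K$ and $\sum_{j\le K}1/j\ll\log K$) only gives $\Gamma(\M)\le K\exp(O(\log K/\log_2K))$, which is far too large; the point is that one must sort the divisors $d$ according to their number of prime factors and their smoothness, apply Rankin's trick range by range, and run the delicate optimisation of \cite{BT} that balances the size of $d$, the count $|\M\cap d\mbz|$, and $K$. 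This optimisation --- which is what pins down the precise constant $2\sqrt2$ and makes the upper bound meet the construction, keeping the secondary $\log_3K/\log_2K$ factor to exactly the right power --- is the step I expect to be the main obstacle; the rest is bookkeeping.

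Finally, I would record that both bounds above are quantitative, so that the $o(1)$ is uniform as $K\to\infty$; this uniformity is what is needed when Lemma \ref{lem2.3} is applied later with $K$ a function of $N$ and $q$.
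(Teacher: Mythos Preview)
Your proposal is correct and matches the paper's approach exactly: the lemma is precisely the main theorem of de la Bret\`eche and Tenenbaum \cite{BT}, and the paper's own proof consists solely of that one-line citation. You go further by sketching the lower- and upper-bound halves of their argument, which is accurate but more than the paper itself provides.
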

\begin{proof}
    This is Theorem 1.1 of \cite{BT}.
\end{proof}

    Note that in the proof of the above lemma, the choice for the set $\M$ satisfies $y_\M:=\max_{m\in\M}P(m)\le (\log K)^{1+o(1)}$.

Let $\M$ be a set of positive integers  with $|\M|=\lfloor q/N\rfloor$ such that 
\begin{align*}\frac{1}{|\M|}\sum_{m,n\in\M}\sqrt{\frac{(m,n)}{[m,n]}}=\exp\bigg((2\sqrt2+o(1))\sqrt{\frac{\log(q/N)\log_3(q/N)}{\log_2(q/N)}}\bigg).
\end{align*}
For any $1\le j\le q-1$, let
$$\M_j:=\{m\in\M:\;q|m-j\},$$
$${\mathcal J}:=\{1\le j\le q-1:\;\M_j\neq\varnothing\},$$
and 
$$\M':=\{\min\M_j:\;j\in{\mathcal J}\}.$$
Define for $m\in\M'$
$$r(m’):=\sqrt{|\M_j|},\;\;\;\;\;(m’\in\M_j)$$
$$R_\chi:=\sum_{m’\in\M'}r(m’)f(m’)\chi(m’),$$
and
$$M_1(R,q):=\sum_{\chi\neq\chi_0({\rm mod}\;q)}|R_\chi|^2,$$
$$M_2(R,q):=\sum_{\chi\neq\chi_0({\rm mod}\;q)}|S_\chi(N)|^2|R_\chi|^2.$$
Then 
\begin{align}\max_{\chi\neq\chi_0({\rm mod}\;q)}|S_\chi(N)|\ge\sqrt{\frac{M_2(R,q)}{M_1(R,q)}}.\label{ResMeth}
\end{align}
For $M_1$ we have
\begin{align*}M_1(R,q)&\le M_1(R,q)+|R_{\chi_0}|^2                                                                      \\&=\sum_{\chi({\rm mod}\;q)}\sum_{m’,n’\in\M'}f(m’)\overline f(n’)\chi(m’)\overline\chi(n’)\\
&=\varphi(q)\sum_{m\in\M'}f(m’)\overline f(m’)\\
&=\varphi(q)|\M'|\\
&\le q^2/N.
\end{align*}
For $M_2$, firstly we have
$$|S_{\chi_0}(N)|^2|R_{\chi_0}|^2\le N^2|\M'|^2\le q^2.$$
Then 
\begin{align*}&M_2(R,q)+O(q^2)\\
&=M_2(R,q)+|S_{\chi_0}(N)|^2|R_{\chi_0}|^2\\
&=\sum_{\chi({\rm mod}\;q)}\sum_{m’,n’\in\M'}r(m’)r(n’)\sum_{k,\ell\le N}f(m’k)\overline f(n’\ell)\chi(m’k)\overline\chi(n’\ell)\\
&=\varphi(q)\sum_{m’,n’\in\M'}r(m’)r(n’)\sum_{k,\ell\le N\atop q|m’k-n’\ell}f(m’k)\overline f(n’\ell)\\
&=\varphi(q)\sum_{m’,n’\in\M'}r(m’)r(n’)\sum_{k,\ell\le N\atop m’k=n’\ell}1\\&\;\;\;\;\;\;+\varphi(q)\sum_{m’,n’\in\M}r(m’)r(n’)\sum_{k,\ell\le N\atop q|m’k-n’\ell>0}2\re f(m’k)\overline f(n’\ell)\\
&\gg\varphi(q)\sum_{m’,n’\in\M'}r(m’)r(n’)\sum_{k,\ell\le N\atop q|m’k-n’\ell}1\\
&=\varphi(q)\sum_{k,\ell\le N}\sum_{m’,n’\in\M'\atop q|m’k-n’\ell}r(m’)r(n’).
\end{align*}
For the inner sum with $k,\ell\le N$ fixed, we have
$$\sum_{m’,n’\in\M'\atop q|m’k-n’\ell}r(m’)r(n’)\ge\sum_{m’,n’\in\M'\atop q|m’k-n’\ell}\min\{r(m’)^2,r(n’)^2\}\ge\sum_{m,n\in\M\atop mk=nl}1.$$
So we get
$$M_2(R,q)+O(q^2)\gg\varphi(q)\sum_{k,\ell\le N}\sum_{m,n\in\M\atop mk=nl}1=\varphi(q)\sum_{m,n\in\M}\sum_{k,\ell\le N\atop mk=nl}1.$$
For $m,n\in\M$ fixed, we have  for the inner sum
$$\sum_{k,\ell\le N\atop mk=n\ell}1\ge \frac{N}{\max\{\frac{m}{(m,n)},\frac{n}{(m,n)}\}}\ge\frac{N}{\sqrt{2\frac{m}{(m,n)}\frac{n}{(m,n)}}}=\frac{N}{\sqrt2}\sqrt{\frac{(m,n)}{[m,n]}}.$$
Thus
\begin{align*}
&M_2(R,q)+O(q^2)\\&\gg\varphi(q)N\sum_{m,n\in\M
\atop [m,n]/(m,n)\le N^2/2}\sqrt{\frac{(m,n)}{[m,n]}}\\
&=\varphi(q)N\bigg(\sum_{m,n\in\M
}\sqrt{\frac{(m,n)}{[m,n]}}-\sum_{m,n\in\M
\atop [m,n]/(m,n)> N^2/2}\sqrt{\frac{(m,n)}{[m,n]}}\bigg)\\
&\gg\varphi(q)N|\M|\exp\bigg((2\sqrt2+o(1))\sqrt{\frac{\log(q/N)\log_3(q/N)}{\log_2(q/N)}}\bigg).
\end{align*}
Here in the last step we used
\begin{align*}\sum_{m,n\in\M
\atop [m,n]/(m,n)> N^2/2}\sqrt{\frac{(m,n)}{[m,n]}}&\ll N^{-2\eta}\sum_{m,n\in\M
}\bigg({\frac{(m,n)}{[m,n]}}\bigg)^{\frac12-\eta}\\
&\ll N^{-2\eta}\prod_{p\le y_\M}\bigg(1+\frac{2}{p^{\frac12-\eta}-1}\bigg)\\
&\ll N^{-2\eta}\exp\big( y_\M^{\frac12+\eta}\big)\\
&\ll N^{-2\eta}\exp\big( (\log (q/N))^{\frac12+\eta+o(1)}\big)\\
&\ll \exp\big(-\tfrac23\delta(\log q)^{\frac12+\delta}\big)\exp\big((\log q)^{\frac12+\frac{2}{3}\delta}\big)\\
&\ll1.
\end{align*}
with $\eta=\delta/3$, $y_\M:=\max_{m\in\M} P(m)\le(\log (q/N))^{1+o(1)}$ and $N>\exp((\log q)^{\frac12+\delta}).$
At last we have
$$\frac{M_2(R,q)}{M_1(R,q)}\gg N\exp\bigg((2\sqrt2+o(1))\sqrt{\frac{\log(q/N)\log_3(q/N)}{\log_2(q/N)}}\bigg)+O(N).$$
Back to \eqref{ResMeth}, we finish the proof.
	\section*{Acknowledgements}
	Z. Dong is supported by the Shanghai Magnolia Talent Plan Pujiang Project (Grant No. 24PJD140) and the National
	Natural Science Foundation of China (Grant No. 	1240011770). W. Wang is supported by the National
	Natural Science Foundation of China (Grant No. 12500). H. Zhang is supported by the Fundamental Research Funds for the Central Universities (Grant No. 531118010622), the National
	Natural Science Foundation of China (Grant No. 1240011979) and the Hunan Provincial Natural Science Foundation of China (Grant No. 2024JJ6120).

	\normalem

\end{document}